%
%
%
\documentclass{article}
\usepackage{natbib}
\usepackage{amsmath,amsthm,amsopn,amstext,amscd,amsfonts,amssymb,verbatim}
\def\diag{\mathop{\rm diag}\nolimits}

\def\build#1#2#3{\mathrel{\mathop{#1}\limits^{#2}_{#3}}}

\def\Cov{\mathop{\rm Cov}\nolimits}

\def\E{\mathop{\rm E}\nolimits}

\renewenvironment{abstract}
                 {\vspace{6pt}
                  \begin{center}
                  \begin{minipage}{5in}
                  \centerline{\textbf{Abstract}}
                  \noindent\ignorespaces
                 }
                 {\end{minipage}\end{center}}
\newtheorem{thm}{\textbf{Theorem}}[section]
\newtheorem{cor}{\textbf{Corollary}}[section]

\theoremstyle{definition}

\setlength{\textheight}{21.6cm} \setlength{\textwidth}{14cm} \setlength{\oddsidemargin}{1cm}
\setlength{\evensidemargin}{1cm}

\title{\Large \textbf{Response surface methodology: Asymptotic normality of the optimal solution}}
\author{
  \textbf{Jos\'e A. D\'{\i}az-Garc\'{\i}a} \thanks{Corresponding author\newline
   {\bf Key words.}  Multiple Response Surfaces (MRS), multiobjective optimisation, probabilistic optimisation,
     stochastic programming, matrix optimisation.\newline
    2000 Mathematical Subject Classification. 62K20; 90C15; 90C29}\\
  {\normalsize Department of Statistics and Computation} \\
  {\normalsize 25350 Buenavista, Saltillo, Coahuila, Mexico} \\
  {\normalsize E-mail: jadiaz@uaaan.mx} \\[2ex]
  \textbf{Jos\'e E. Rodr\'{\i}guez} \\
  {\normalsize Department of Mathematics} \\
  {\normalsize University of Guanajuato} \\
  {\normalsize Jalisco s/n, Mineral de Valenciana}\\
  {\normalsize 36240 Guanajuato, M\'exico}\\
  {\normalsize E-mail: elias.rodriguez@ugto.mx}\\[2ex]
  \textbf{Rogelio Ramos-Quiroga} \\
  {\normalsize Centro de Investigaci\'on en Matem\'aticas} \\
  {\normalsize Department of Probability and Statistics} \\
  {\normalsize Callej\'on de Jalisco s/n, Mineral de Valenciana}\\
  {\normalsize 36240 Guanajuato, Guanajuato, M\'exico}\\
  {\normalsize E-mail: rramosq@cimat.mx}
}
\date{}
\begin{document}
\maketitle

\begin{abstract}
Sensitivity analysis of the optimal solution in response surface methodology is studied and an
explicit form of the effect of perturbation of the regression coefficients on the optimal
solution is obtained. The characterisation of the critical point of the convex program
corresponding to the optimum of a response surface model is also studied. The asymptotic
normality of the optimal solution follows by standard methods.
\end{abstract}

\section{Introduction}\label{Sec1}

From a point of view of the mathematical programming, the sensitivity analysis studies the
effect of small perturbations in the parameters on the optimal objective function value and on
the critical point for mathematical programming problems. In general, these parameters shape
the objective function and constraint the approach to the problem of mathematical programming.
The sensitivity analysis of the mathematical programming has been studied by several authors,
see \citet{j:77}, \citet{d;84} and \citet{fg:82} among many other. As a direct consequence of
the sensitivity analysis, the asymptotic normality study of the critical point follows by
standard methods of mathematical statistics (see similar results for the case of maximum
likelihood estimates \citet{as:58}). This last consequence makes  the sensitivity analysis
very appealing for researching\footnote{In the context of the mathematical statistical the
sensitivity analysis consist in to study the ways in which the estimators of certain model are
affected by omission of a particular set of variables or by the inclusion or omission of a
particular observation or set of observations, see \citet{chh:88}.} from a statistical point of
view.

In this work, we study the effect of perturbations of the regression parameters on the optimal
solution of the response surface model and then the asymptotic normality of the critical point
is obtained.

A very useful statistical tool in the study of designs, phenomena and experiments, is the
response surfaces methodology. Which enables to find an analytical relationship between the
response and controlled variables through a process of continuous improvement and optimisation.

Specifically, it is assumed that a researcher knows a system, which considers some observable
response variables $y$ which depends on some input variables, $x_{1}, \dots x_{k}$. Also it is
assumed that the input  variables $x_{i}^{'s}$ can be controlled by the researcher with a
minimum error.

In general we have that
\begin{equation}\label{rrs}
    y(\mathbf{x}) = \eta(x_{1}, \dots x_{k})
\end{equation}
where the form of the function $\eta(\cdot)$, usually termed as the true response surface, is
unknown and perhaps, very complicated; and $\mathbf{x} =(x_{1}, \dots x_{k})'$. The success of
the response surfaces methodology depends on the approximation of $\eta(\cdot)$ for a
polynomial of low degree in some region.

In this work we assume that $\eta(\cdot)$ can be soundly approximated by a polynomial of second
order, that is
\begin{equation}\label{ersp}
   y(\mathbf{x}) = \beta_{0} + \displaystyle\sum_{i=1}^{n}\beta_{i}x_{i} + \sum_{i=1}^{n}
    \beta_{ii}x_{i}^{2} +  \sum_{i=1}^{n}\sum_{j>i}^{n}\beta_{ij}x_{i}x_{j}
\end{equation}
where the unknown parameters $\beta_{j}^{'s}$ can be estimated via regression's techniques, as
it is described in next section.

Next, we are interested in obtaining the levels of the input variables $x_{i}^{'s}$ such that
the response variables $y$ is minimum (optimal). This can be achieved if the following
mathematical program is solved
\begin{equation}\label{opp}
  \begin{array}{c}
    \build{\min}{}{\mathbf{x}}y(\mathbf{x}) \\
    \mbox{subject to}\\
    \mathbf{x} \in \mathfrak{X},
  \end{array}
\end{equation}
where $\mathfrak{X}$ is certain operating region for the input variables $x_{i}^{'s}$.

Now, two questions, intimately related, can be observed:
\begin{enumerate}
  \item When the estimation of (\ref{ersp}) is considered into (\ref{opp}) the critical point
  $\mathbf{x}^{*}$ obtained as solution shall be a function of the estimators
  $\widehat{\beta}_{j}^{\ 's}$ of the $\beta_{j}^{'s}$. Thus, given that $\widehat{\beta}_{j}^{\ 's}$ are random variables,
  then $\mathbf{x}^{*}\equiv \mathbf{x}^{*}(\widehat{\beta}_{j}^{\ 's})$ is a random vector
  too. The question is, if the  distribution of $\widehat{\boldsymbol{\beta}}$ is known, then what is the
  distribution of $\mathbf{x}^{*}(\widehat{\beta}_{j}^{\ 's})$?
  \item And, perhaps it is not sufficient to know only a point estimate of $\mathbf{x}^{*}(\widehat{\beta}_{j}^{\ 's})$,
 should be more convenient to know an interval estimate.
\end{enumerate}
The distribution of the critical point in response surface methodology was studied by
\citet{dr:01,dre:02}, when $y(\mathbf{x})$ is defined as an hyperplane.

In this work we give a response to these two questions. First, in Section \ref{sec2} some
notation is established. In Section \ref{sec3} the response surface optimisation problem is
proposed. First-order and second-order Kuhn-Tucker conditions are stated characterising the
critical point in Section \ref{sec4}. Finally, the asymptotic normality of a critical point is
established in Section \ref{sec5}.

\section{Notation}\label{sec2}

A detailed discussion of response surface methodology may be found in \citet{kc:87} and
\citet{mma:09}. For convenience, their principal properties and usual notation shall be
restated here.

Let $N$ be the number of experimental runs. The response variable is measured for each setting
of a group  of $n$ coded variables (also are termed factors) $x_{1}, x_{2},\dots,x_{n}$. We
assume that the response variable can be modeled by \textit{second order polynomials regression
model} in terms of $x_{i}^{'s}$. Hence, the response model can be written as
\begin{equation}\label{lmu}
    \mathbf{y} = \mathbf{X}\boldsymbol{\beta} + \boldsymbol{\varepsilon}
\end{equation}
where $\mathbf{y} \in \Re^{N}$ is the vector of observations on the response variable,
$\mathbf{X} \in \Re^{N \times p}$ is an matrix of rank $p$ termed design or regression matrix,
$p = 1 + n + n(n+1)/2$, $\boldsymbol{\beta} \in \Re^{p}$ is a vector of unknown constant
parameters, and $\boldsymbol{\varepsilon} \in \Re^{N}$ is a random error vector such that
$\boldsymbol{\varepsilon} \sim \mathcal{N}_{N} (\mathbf{0}, \sigma^{2}\mathbf{I}_{N})$, i.e.
$\boldsymbol{\varepsilon}$ has an \textit{$N$-dimensional  normal distribution} with $\E
(\boldsymbol{\varepsilon}) = \mathbf{0}$ and $\Cov(\boldsymbol{\varepsilon}) =
\sigma^{2}\mathbf{I}_{N}$. Then we have:
\begin{description}
\item[$\centerdot$] $\mathbf{x} = \left(x_{1}, x_{2},\dots,x_{n}\right)'$: Vector of controllable
variables or factors. Formally, to each factor $A, B,...$ is associated an $x_{i}^{'s}$
variable

\item[$\centerdot$] $\widehat{\boldsymbol{\beta}}$: The least squares estimator of
$$
  \boldsymbol{\beta} = (\beta_{0},\beta_{1},\dots,\beta_{n},\beta_{11},\dots,\beta_{nn},\beta_{12},
  \dots,\beta_{(n-1)n})'
$$
given by
$$
  \widehat{\boldsymbol{\beta}}= (\mathbf{X}'\mathbf{X})^{-1}\mathbf{X}'\mathbf{y} =
  (\widehat{\beta}_{0},\widehat{\beta}_{1},\dots,\widehat{\beta}_{n},\widehat{\beta}_{11},
  \dots,\widehat{\beta}_{nn},\widehat{\beta}_{12},\dots,\widehat{\beta}_{(n-1)n})'.
$$
Furthermore, under the assumption that $\boldsymbol{\varepsilon} \sim \mathcal{N}_{N}
(\mathbf{0}, \sigma^{2}\mathbf{I}_{N})$, then $\widehat{\boldsymbol{\beta}} \sim
\mathcal{N}_{p} \left(\boldsymbol{\beta}, \sigma^{2}(\mathbf{X}'\mathbf{X})^{-1}\right)$.

\item[$\centerdot$] $\mathbf{z}(\mathbf{x}) = (1,x _{1}, x_{2}, \dots, x_{n}, x _{1}^{2}, x_{2}^{2},
\dots, x_{n}^{2}, x _{1} x_{2}, x_{1}x_{3}\dots, x_{n-1}x_{n})'.$

\item[$\centerdot$] $\widehat{\boldsymbol{\beta}}_{1}=
(\widehat{\beta}_{1},\dots,\widehat{\beta}_{n})'$ and
$$
  \widehat{\mathbf{B}} = \frac{1}{2}
  \left (
     \begin{array}{cccc}
       2\widehat{\beta}_{11} & \widehat{\beta}_{12} & \cdots & \widehat{\beta}_{1n} \\
       \widehat{\beta}_{21} & 2\widehat{\beta}_{22} & \cdots & \widehat{\beta}_{2n} \\
       \vdots & \vdots & \ddots & \vdots \\
       \widehat{\beta}_{n1} & \widehat{\beta}_{n2} & \cdots & 2\widehat{\beta}_{nn}
     \end{array}
  \right)
$$

\item[$\centerdot$] \hspace{-3cm} \vspace{-.85cm}
\begin{eqnarray*}
  \widehat{y} (\mathbf{x}) &=& \mathbf{z}'(\mathbf{x})\widehat{\boldsymbol{\beta}} \\
    &=& \widehat{\beta}_{0} + \displaystyle\sum_{i=1}^{n}\widehat{\beta}_{i}x_{i} + \sum_{i=1}^{n}
    \widehat{\beta}_{ii}x_{i}^{2} +
    \sum_{i=1}^{n}\sum_{j>i}^{n}\widehat{\beta}_{ij}x_{i}x_{j}\\
    &=& \widehat{\beta}_{0}+ \widehat{\boldsymbol{\beta}}'_{1}\mathbf{x}+\mathbf{x}^{'}\widehat{\mathbf{B}}\mathbf{x}:
\end{eqnarray*}
Estimated response surface or predictor equation at the point $\mathbf{x}$.

\item[$\centerdot$]$\widehat{\sigma}^{2} = \displaystyle \frac{\mathbf{y}'(\mathbf{I}_{N} -
\mathbf{X}(\mathbf{X}'\mathbf{X})^{-1}\mathbf{X}')\mathbf{y}}{N-p}$: Estimator of variance
$\sigma^{2}$ such that $(N-p)\widehat{\sigma}^{2}/\sigma^{2}$ has a $\chi^{2}$-distribution
with $(N-p)$ freedom degrees; denoted this fact as $(N-p)\widehat{\sigma}^{2}/\sigma^{2} \sim
\chi^{2}_{_{N-p}}$.

Finally, note that
\begin{equation}\label{eq4}
   \widehat{\Cov} (\widehat{\boldsymbol{\beta}}) = \widehat{\sigma}^{2}(\mathbf{X}'
    \mathbf{X})^{-1}.
\end{equation}
\end{description}

\section{Response surface optimisation}\label{sec3}

For convenience, those concepts and notations required here are listed below in terms of the
estimated model of response surface. Definitions and details properties, etc., may be found in
\citet{kc:87} and \citet{mma:09}.

Ideally, the goal would be to solve (\ref{opp}), however note that in general the parameters of
form $\beta_{j}^{'s}$ are unknown. Then proceeding as it is indicated in the previous section,
the $\beta_{j}^{'s}$ parameters are estimated. Then, the response surface optimisation problem
in general is proposed as the following substitute mathematical program
\begin{equation}\label{equiv1}
  \begin{array}{c}
    \build{\min}{}{\mathbf{x}}\widehat{y}(\mathbf{x}) \\
    \mbox{subject to}\\
    \mathbf{x} \in \mathfrak{X},
  \end{array}
\end{equation}
which is a nonlinear mathematical problem, see \citet{kc:87},  and
\citet{r:79}. Here $\mathfrak{X}$ denotes the experimental region,
which is defined, in general, as a hypercube
$$
  \mathfrak{X}= \{\mathbf{x}|l_{i} < x_{i} < u_{i}, \quad i = 1, 2, \dots, n\},
$$
where $\mathbf{l} = \left(l_{1}, l_{2}, \dots, l_{n}\right)'$
defines the vector of lower bounds of factors, and $\mathbf{u} =
\left(u_{1}, u_{2}, \dots, u_{n}\right)'$ defines the vector of
upper bounds of factors. Alternatively, the experimental region is
defined as an hypersphere
\begin{equation}\label{c}
    \mathfrak{X}= \{\mathbf{x}|\mathbf{x}'\mathbf{x} \leq c^{2},  c > 0\} ,
\end{equation}
where note that $\mathbf{x}'\mathbf{x} = ||\mathbf{x}||^{2}$ and in general $c$ is specified by
the experimental design model used. In this paper it is considered $\mathfrak{X}$ defined by
(\ref{c}).

\section{Characterisation of the critical point}\label{sec4}

Let $\mathbf{x}^{*}(\widehat{\boldsymbol{\beta}}) \in \Re^{n}$ be the unique optimal solution
of program (\ref{equiv1}) with the corresponding Lagrange multiplier
$\lambda^{*}(\widehat{\boldsymbol{\beta}}) \in \Re$. The Lagrangian is defined by
\begin{equation}\label{la}
    L(\mathbf{x}, \lambda; \widehat{\boldsymbol{\beta}}) = \widehat{y}(\mathbf{x}) + \lambda(||\mathbf{x}||^{2} -
  c^{2}).
\end{equation}
Similarly, $\mathbf{x}^{*}(\boldsymbol{\beta}) \in \Re^{n}$ denotes the unique optimal solution
of program (\ref{opp}) with the corresponding Lagrange multiplier
$\lambda^{*}(\boldsymbol{\beta}) \in \Re$.

Now we establish the local Kuhn-Tucker conditions that guarantee that the Kuhn-Tucker point
$\mathbf{r}^{*}(\widehat{\boldsymbol{\beta}}) =
\left[\mathbf{x}^{*}(\widehat{\boldsymbol{\beta}}),
\lambda^{*}(\widehat{\boldsymbol{\beta}})\right]' \in \Re^{n+1}$ is a unique global minimum of
convex program (\ref{equiv1}). First recall that for $f:\Re^{n} \rightarrow \Re$, then
$\displaystyle\frac{\partial f}{\partial \mathbf{x}} \equiv \nabla_{\mathbf{x}}$ denotes the
gradient of function $f$.

\begin{thm}\label{teo1}
The necessary and sufficient conditions that a point
$\mathbf{x}^{*}(\widehat{\boldsymbol{\beta}}) \in \Re^{n}$ for arbitrary fixed
$\widehat{\boldsymbol{\beta}} \in \Re^{p}$, be a unique global minimum of the convex program
(\ref{equiv1}) is that, $\mathbf{x}^{*}(\widehat{\boldsymbol{\beta}})$ and the corresponding
Lagrange multiplier $\lambda^{*}(\widehat{\boldsymbol{\beta}}) \in \Re$, fulfill the
Kuhn-Tucker first order conditions
\begin{eqnarray}\label{kt1}
  \nabla_{\mathbf{x}}L(\mathbf{x}, \lambda; \widehat{\boldsymbol{\beta}}) =
  \left\{
    \begin{array}{l}
      \mathbf{M}(\mathbf{x}) \widehat{\boldsymbol{\beta}} + 2\lambda(\widehat{\boldsymbol{\beta}}) \mathbf{x}\\ \quad\mbox{or}\\
      \widehat{\boldsymbol{\beta}}_{1} + 2(\widehat{\mathbf{B}}+ \lambda(\widehat{\boldsymbol{\beta}}) \mathbf{I}_{n})\mathbf{x}
    \end{array}
  \right \}
   &=& \mathbf{0} \\
  \label{kt2}
  \nabla_{\lambda}L(\mathbf{x}, \lambda; \widehat{\boldsymbol{\beta}}) = ||\mathbf{x}||^{2} - c^{2} & \leq & 0 \\
  \label{kt3}
  \lambda(\widehat{\boldsymbol{\beta}})(||\mathbf{x}||^{2} - c^{2}) &=& 0\\
  \label{kt4}
  \lambda(\widehat{\boldsymbol{\beta}}) &\geq& 0
\end{eqnarray}
where
\begin{eqnarray*}
  \mathbf{M}(\mathbf{x}) &=& \nabla_{\mathbf{x}}\mathbf{z}'(\mathbf{x}) = \frac{\partial \mathbf{z}'(\mathbf{x})}{\partial \mathbf{x}} \\
     &=& (\mathbf{0}\vdots \mathbf{I}_{n} \vdots 2\diag(\mathbf{x}) \vdots \mathbf{C}_{1}\vdots\cdots
     \vdots\mathbf{C}_{n-1}) \in \Re^{n \times p},
\end{eqnarray*}
with
$$
  \mathbf{C}_{i} =
  \left(
     \begin{array}{c}
       \mathbf{0}'_{1} \\
       \vdots \\
       \mathbf{0}'_{i-1} \\
       \mathbf{x}'\mathbf{A}_{i} \\
       x_{i}\mathbf{I}_{n-i}
     \end{array}
  \right), i = 1, \dots, n-1, \quad \mathbf{0}_{j} \in \Re^{n-i}, j = 1, \dots, i-1,
$$
observing that when $i=1$ (i.e. j = 0) means that this row not appear in $\mathbf{C}_{1}$; and
$$
  \mathbf{A}_{i} =
  \left (
     \begin{array}{c}
       \mathbf{0}'_{1} \\
       \vdots \\
       \mathbf{0}'_{i} \\
       \mathbf{I}_{n-i}
     \end{array}
  \right), \quad \mathbf{0}'_{k} \in \Re^{n-i}, k = 1, \dots,i.
$$
\end{thm}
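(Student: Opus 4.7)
The strategy is to invoke the standard Karush–Kuhn–Tucker theorem for convex programs under Slater's constraint qualification and then to make the first-order condition explicit by computing $\nabla_{\mathbf{x}}\widehat{y}(\mathbf{x})$ in two equivalent ways. The feasible set $\{\mathbf{x}:\|\mathbf{x}\|^{2}\leq c^{2}\}$ is convex, and the hypothesis that (\ref{equiv1}) is a convex program means $\widehat{y}$ is convex (equivalently, $\widehat{\mathbf{B}}$ is positive semidefinite); in fact strict convexity of $\widehat{y}$ gives uniqueness of the global minimum. Slater's condition is trivially satisfied because $\mathbf{x}=\mathbf{0}$ is strictly feasible ($c>0$). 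Granting these two facts, the general KKT theorem yields the equivalence between $\mathbf{x}^{*}(\widehat{\boldsymbol{\beta}})$ being the unique global minimum and the existence of $\lambda^{*}(\widehat{\boldsymbol{\beta}})\in\Re$ such that (\ref{kt1})–(\ref{kt4}) hold.

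Next, I would compute the pieces of (\ref{kt1})–(\ref{kt4}) directly from the Lagrangian (\ref{la}). Differentiating in $\lambda$ reproduces (\ref{kt2}), while (\ref{kt3}) and (\ref{kt4}) are the usual complementary slackness and dual feasibility assertions of the KKT theorem. For the gradient in $\mathbf{x}$ we have $\nabla_{\mathbf{x}}L=\nabla_{\mathbf{x}}\widehat{y}(\mathbf{x})+2\lambda\mathbf{x}$, and the two bracketed forms in (\ref{kt1}) correspond to the two representations of $\widehat{y}$ recalled in Section \ref{sec2}. The representation $\widehat{y}(\mathbf{x})=\widehat{\beta}_{0}+\widehat{\boldsymbol{\beta}}_{1}'\mathbf{x}+\mathbf{x}'\widehat{\mathbf{B}}\mathbf{x}$ with symmetric $\widehat{\mathbf{B}}$ immediately yields $\nabla_{\mathbf{x}}\widehat{y}(\mathbf{x})=\widehat{\boldsymbol{\beta}}_{1}+2\widehat{\mathbf{B}}\mathbf{x}$, which is the second form. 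The representation $\widehat{y}(\mathbf{x})=\mathbf{z}'(\mathbf{x})\widehat{\boldsymbol{\beta}}$ gives, by the chain rule, $\nabla_{\mathbf{x}}\widehat{y}(\mathbf{x})=\mathbf{M}(\mathbf{x})\widehat{\boldsymbol{\beta}}$ with $\mathbf{M}(\mathbf{x})=\partial\mathbf{z}'(\mathbf{x})/\partial\mathbf{x}$, which is the first form.

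The main obstacle — really the only substantive calculation — is verifying the explicit block decomposition of $\mathbf{M}(\mathbf{x})$ claimed in the statement, which is a matter of careful bookkeeping since $\mathbf{z}(\mathbf{x})$ enumerates $p=1+n+n(n+1)/2$ monomials in a specific order. I would partition $\mathbf{M}(\mathbf{x})$ conformably with the blocks constant, linear $x_{i}$, pure quadratic $x_{i}^{2}$, and the cross-product groups $\{x_{i}x_{j}:j>i\}$ for each $i=1,\dots,n-1$, verifying column-by-column that these contribute $\mathbf{0}$, $\mathbf{I}_{n}$, $2\diag(\mathbf{x})$, and $\mathbf{C}_{i}$ respectively. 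For the $i$-th cross-product block, whose columns are indexed by $j=i+1,\dots,n$, differentiating $x_{i}x_{j}$ with respect to $x_{k}$ gives $0$ when $k<i$ (producing the leading $\mathbf{0}_{1}',\dots,\mathbf{0}_{i-1}'$ rows), gives $(x_{i+1},\dots,x_{n})=\mathbf{x}'\mathbf{A}_{i}$ when $k=i$, and gives $x_{i}\delta_{kj}$ when $k>i$, which assembles into the block $x_{i}\mathbf{I}_{n-i}$, reproducing the claimed form of $\mathbf{C}_{i}$.

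Finally, to close the equivalence between the two forms of (\ref{kt1}), I would match the columns of $\mathbf{M}(\mathbf{x})\widehat{\boldsymbol{\beta}}$ against the ordering $(\widehat{\beta}_{0},\widehat{\beta}_{1},\dots,\widehat{\beta}_{n},\widehat{\beta}_{11},\dots,\widehat{\beta}_{nn},\widehat{\beta}_{12},\dots,\widehat{\beta}_{(n-1)n})'$ and use the definition of $\widehat{\mathbf{B}}$ to recognise that the contributions from the $\widehat{\beta}_{ii}x_{i}^{2}$ and $\widehat{\beta}_{ij}x_{i}x_{j}$ terms together sum to exactly $2\widehat{\mathbf{B}}\mathbf{x}$, while the linear block contributes $\widehat{\boldsymbol{\beta}}_{1}$. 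This establishes that both bracketed expressions in (\ref{kt1}) are the same vector, completing the proof.
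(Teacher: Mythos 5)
Your proposal is correct and is exactly the standard argument the paper implicitly relies on: the paper states Theorem \ref{teo1} without any proof, so your combination of the KKT theorem for convex programs under Slater's condition (with strict convexity giving uniqueness), the two gradient computations, and the column-by-column bookkeeping for $\mathbf{M}(\mathbf{x})$ supplies precisely what is missing. The verification that the cross-product blocks assemble into $\mathbf{C}_{i}$ and that $\mathbf{M}(\mathbf{x})\widehat{\boldsymbol{\beta}}=\widehat{\boldsymbol{\beta}}_{1}+2\widehat{\mathbf{B}}\mathbf{x}$ (plus the zero contribution of $\widehat{\beta}_{0}$) checks out.
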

In addition, assume that strict complementarity slackness holds at
$\mathbf{x}^{*}(\boldsymbol{\beta})$ with respect to $\lambda^{*}(\boldsymbol{\beta})$, that is
\begin{equation}\label{a21}
    \lambda^{*}(\boldsymbol{\beta}) > 0 \Leftrightarrow ||\mathbf{x}||^{2} - c^{2} = 0.
\end{equation}
Analogously, the Khun-Tucker condition (\ref{kt1}) to (\ref{kt4}) for
$\widehat{\boldsymbol{\beta}} = \boldsymbol{\beta}$ are stated next.
\begin{cor}\label{cor1}
The necessary and sufficient conditions that a point $\mathbf{x}^{*}(\boldsymbol{\beta}) \in
\Re^{n}$ for arbitrary fixed $\boldsymbol{\beta} \in \Re^{p}$, be a unique global minimum of
the convex program (\ref{opp}) is that, $\mathbf{x}^{*}(\boldsymbol{\beta})$ and the
corresponding Lagrange multiplier $\lambda^{*}(\boldsymbol{\beta}) \in \Re$, fulfill the
Kuhn-Tucker first order conditions
\begin{eqnarray}\label{ktp1}
  \nabla_{\mathbf{x}}L(\mathbf{x}, \lambda; \widehat{\boldsymbol{\beta}}) =
  \left\{
    \begin{array}{l}
      \mathbf{M}(\mathbf{x}) \boldsymbol{\beta} + 2\lambda(\boldsymbol{\beta}) \mathbf{x}\\ \quad\mbox{or}\\
      \boldsymbol{\beta}_{1} + 2(\mathbf{B}\mathbf{x}+\lambda(\boldsymbol{\beta})\mathbf{I}_{n})\mathbf{x}
    \end{array}
  \right \}
   &=& \mathbf{0} \\
  \label{ktp2}
  \nabla_{\lambda}L(\mathbf{x}, \lambda; \boldsymbol{\beta}) = ||\mathbf{x}||^{2} - c^{2} & \leq & 0 \\
  \label{ktp3}
  \lambda(\boldsymbol{\beta})(||\mathbf{x}||^{2} - c^{2}) &=& 0\\
  \label{ktp4}
  \lambda(\boldsymbol{\beta}) &\geq& 0
\end{eqnarray}
and $\lambda(\boldsymbol{\beta}) = 0$ when $||\mathbf{x}||^{2} - c^{2} < 0$ at
$\left[\mathbf{x}^{*}(\boldsymbol{\beta}),\lambda^{*}(\boldsymbol{\beta})\right]'$.
\end{cor}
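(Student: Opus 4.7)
The plan is to obtain Corollary \ref{cor1} as an immediate specialisation of Theorem \ref{teo1}. The key observation is that Theorem \ref{teo1} is stated for an \emph{arbitrary fixed} $\widehat{\boldsymbol{\beta}}\in\Re^{p}$: neither the convexity of the program nor the derivation of the Kuhn--Tucker conditions depended on the statistical interpretation of $\widehat{\boldsymbol{\beta}}$ as a least squares estimator. Consequently, one may read the theorem with $\widehat{\boldsymbol{\beta}}$ renamed to an arbitrary vector $\boldsymbol{\beta}\in\Re^{p}$, and obtain a characterisation of the critical point of (\ref{opp}).

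Concretely, I would proceed in three short steps. First, I would write down the Lagrangian of program (\ref{opp}),
\begin{equation*}
    L(\mathbf{x},\lambda;\boldsymbol{\beta}) = y(\mathbf{x}) + \lambda(\|\mathbf{x}\|^{2}-c^{2}) = \mathbf{z}'(\mathbf{x})\boldsymbol{\beta} + \lambda(\|\mathbf{x}\|^{2}-c^{2}),
\end{equation*}
which has exactly the same functional form as (\ref{la}), only with the true parameter vector in place of its estimator. Second, I would compute its gradients with respect to $\mathbf{x}$ and $\lambda$: since $\nabla_{\mathbf{x}}\mathbf{z}'(\mathbf{x}) = \mathbf{M}(\mathbf{x})$ and $\nabla_{\mathbf{x}}\|\mathbf{x}\|^{2} = 2\mathbf{x}$, we obtain $\nabla_{\mathbf{x}}L = \mathbf{M}(\mathbf{x})\boldsymbol{\beta} + 2\lambda(\boldsymbol{\beta})\mathbf{x}$, which can equivalently be rewritten as $\boldsymbol{\beta}_{1}+2(\mathbf{B}+\lambda(\boldsymbol{\beta})\mathbf{I}_{n})\mathbf{x}$ using the block decomposition $y(\mathbf{x}) = \beta_{0}+\boldsymbol{\beta}_{1}'\mathbf{x}+\mathbf{x}'\mathbf{B}\mathbf{x}$; and $\nabla_{\lambda}L = \|\mathbf{x}\|^{2}-c^{2}$. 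These are precisely the left-hand sides of (\ref{ktp1}) and (\ref{ktp2}).

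Third, I would invoke Theorem \ref{teo1} verbatim, with $\boldsymbol{\beta}$ playing the role of the arbitrary fixed parameter vector. This yields equations (\ref{ktp1})--(\ref{ktp4}) as necessary and sufficient conditions for $\mathbf{x}^{*}(\boldsymbol{\beta})$ to be the unique global minimum of (\ref{opp}). Finally, the extra clause that $\lambda(\boldsymbol{\beta})=0$ whenever $\|\mathbf{x}\|^{2}-c^{2}<0$ at $[\mathbf{x}^{*}(\boldsymbol{\beta}),\lambda^{*}(\boldsymbol{\beta})]'$ is simply the contrapositive of the strict complementarity hypothesis (\ref{a21}) assumed immediately before the statement of the corollary.

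Since the corollary is a direct instance of the theorem under a change of symbol, there is no genuine obstacle: the proof is essentially a verification of notation. The only point worth stating carefully is that the convexity argument supporting Theorem \ref{teo1} (positive semidefiniteness of $\widehat{\mathbf{B}}$ together with convexity of the spherical constraint) transfers to $\mathbf{B}$ without change, so that first-order Kuhn--Tucker conditions remain both necessary and sufficient for a global minimiser of (\ref{opp}).
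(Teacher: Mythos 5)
Your proposal is correct and matches the paper's treatment: the paper offers no separate argument for Corollary \ref{cor1}, simply observing that it is Theorem \ref{teo1} read with $\widehat{\boldsymbol{\beta}}=\boldsymbol{\beta}$, which is exactly the substitution you carry out (and you even silently repair the paper's typo $\mathbf{B}\mathbf{x}+\lambda\mathbf{I}_{n}$ to the dimensionally correct $\mathbf{B}+\lambda\mathbf{I}_{n}$). The only cosmetic remark is that the final clause $\lambda(\boldsymbol{\beta})=0$ when $\|\mathbf{x}\|^{2}-c^{2}<0$ already follows from complementary slackness (\ref{ktp3}) together with (\ref{ktp4}), without needing to invoke the strict complementarity assumption (\ref{a21}).
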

Observe that, due to the strict convexity of the constraint and objective function, the
second-order sufficient condition is evidently fulfilled for the convex program (\ref{equiv1}).

Next is established the existence of a once continuously differentiable solution to program
(\ref{equiv1}), see \citet{fg:82}.

\begin{thm}\label{teo2}
Assume that (\ref{a21}) hold true and the second-order sufficient condition is  for the convex
program (\ref{equiv1}). Then
\begin{enumerate}
  \item $\mathbf{x}^{*}(\boldsymbol{\beta})$ is a unique global minimum of program (\ref{opp})
  and $\lambda^{*}(\boldsymbol{\beta})$ is unique too.
  \item For $\widehat{\boldsymbol{\beta}} \in V_{\varepsilon}(\boldsymbol{\beta})$
  (is an $\varepsilon-$neighborhood or open ball), there exist a unique once continuously
  differentiable vector function
  $$
    \mathbf{r}^{*}(\widehat{\boldsymbol{\beta}}) =
    \left[
    \begin{array}{c}
      \mathbf{x}^{*}(\widehat{\boldsymbol{\beta}}) \\
      \lambda^{*}(\widehat{\boldsymbol{\beta}})
    \end{array}
    \right] \in \Re^{n+1}
  $$
  satisfying the second order sufficient conditions of problem (\ref{opp})such that $\mathbf{r}^{*}(\boldsymbol{\beta}) =
\left[\mathbf{x}^{*}(\boldsymbol{\beta}), \lambda^{*}(\boldsymbol{\beta})\right]'$ and hence,
$\mathbf{x}^{*}(\widehat{\boldsymbol{\beta}})$ is a unique global minimum of problem
(\ref{equiv1}) with associated unique Lagrange multiplier
$\lambda^{*}(\widehat{\boldsymbol{\beta}})$.
  \item For $\widehat{\boldsymbol{\beta}} \in V_{\varepsilon}(\boldsymbol{\beta})$, the status of the constraint is unchanged and
  $\lambda^{*}(\widehat{\boldsymbol{\beta}}) > 0 \Leftrightarrow ||\mathbf{x}||^{2} - c^{2} = 0$ is hold.
\end{enumerate}
\end{thm}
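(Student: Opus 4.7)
The plan is to recognise this as a prototypical Fiacco--Garc\'{\i}a sensitivity statement and prove all three items by a single application of the implicit function theorem to the Kuhn--Tucker system (\ref{ktp1})--(\ref{ktp4}) evaluated at $\boldsymbol{\beta}$. The overall logic is: uniqueness at $\boldsymbol{\beta}$ comes from the convex-program hypothesis plus LICQ; the $C^{1}$ branch in a neighbourhood comes from nonsingularity of the Jacobian of the KKT map, which in turn follows from strict complementarity (\ref{a21}) together with the second-order sufficient condition; constraint-status preservation is then just continuity.

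For item 1, strict convexity of the quadratic objective on the convex feasible ball gives a unique global minimiser $\mathbf{x}^{*}(\boldsymbol{\beta})$. The only constraint has gradient $2\mathbf{x}^{*}(\boldsymbol{\beta})$, which is nonzero whenever the constraint is active (recall $c>0$), so the linear independence constraint qualification holds. Then (\ref{ktp1}) determines $\lambda^{*}(\boldsymbol{\beta})$ uniquely from $\mathbf{x}^{*}(\boldsymbol{\beta})$.

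For items 2 and 3 I would define the Kuhn--Tucker map $F:\Re^{n+1}\times\Re^{p}\to\Re^{n+1}$ by
$$
F(\mathbf{x},\lambda;\widehat{\boldsymbol{\beta}})=\begin{pmatrix}\widehat{\boldsymbol{\beta}}_{1}+2(\widehat{\mathbf{B}}+\lambda\mathbf{I}_{n})\mathbf{x}\\ \|\mathbf{x}\|^{2}-c^{2}\end{pmatrix}.
$$
Strict complementarity (\ref{a21}) forces $\lambda^{*}(\boldsymbol{\beta})>0$ and $\|\mathbf{x}^{*}(\boldsymbol{\beta})\|^{2}=c^{2}$, so $F(\mathbf{r}^{*}(\boldsymbol{\beta});\boldsymbol{\beta})=\mathbf{0}$. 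The Jacobian of $F$ with respect to $(\mathbf{x},\lambda)$ at $\mathbf{r}^{*}(\boldsymbol{\beta})$ is the bordered matrix
$$
J=\begin{pmatrix}2(\mathbf{B}+\lambda^{*}(\boldsymbol{\beta})\mathbf{I}_{n}) & 2\mathbf{x}^{*}(\boldsymbol{\beta})\\ 2\mathbf{x}^{*}(\boldsymbol{\beta})' & 0\end{pmatrix}.
$$
The second-order sufficient condition says that $\mathbf{B}+\lambda^{*}(\boldsymbol{\beta})\mathbf{I}_{n}$ is positive definite on the tangent subspace $\{\mathbf{d}\in\Re^{n}:\mathbf{x}^{*}(\boldsymbol{\beta})'\mathbf{d}=0\}$; coupled with LICQ ($\mathbf{x}^{*}(\boldsymbol{\beta})\neq\mathbf{0}$) this is exactly the standard hypothesis guaranteeing $\det J\neq 0$ for a bordered matrix. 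The implicit function theorem then delivers the open ball $V_{\varepsilon}(\boldsymbol{\beta})$ and a unique once continuously differentiable map $\widehat{\boldsymbol{\beta}}\mapsto\mathbf{r}^{*}(\widehat{\boldsymbol{\beta}})=[\mathbf{x}^{*}(\widehat{\boldsymbol{\beta}}),\lambda^{*}(\widehat{\boldsymbol{\beta}})]'$ satisfying $F(\mathbf{r}^{*}(\widehat{\boldsymbol{\beta}});\widehat{\boldsymbol{\beta}})=\mathbf{0}$ and reducing to $\mathbf{r}^{*}(\boldsymbol{\beta})$ at $\widehat{\boldsymbol{\beta}}=\boldsymbol{\beta}$.

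For item 3, continuity of $\mathbf{r}^{*}$ together with $\lambda^{*}(\boldsymbol{\beta})>0$ forces $\lambda^{*}(\widehat{\boldsymbol{\beta}})>0$ on a possibly shrunk neighbourhood, while $\|\mathbf{x}^{*}(\widehat{\boldsymbol{\beta}})\|^{2}=c^{2}$ is built into $F$, so conditions (\ref{kt1})--(\ref{kt4}) hold at $\mathbf{r}^{*}(\widehat{\boldsymbol{\beta}})$. Continuity of the Hessian in $\widehat{\boldsymbol{\beta}}$ also preserves the second-order sufficient condition on the neighbourhood, so Theorem \ref{teo1} identifies $\mathbf{x}^{*}(\widehat{\boldsymbol{\beta}})$ as the unique global minimiser of (\ref{equiv1}), finishing item 2. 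The main obstacle I foresee is the nonsingularity of the bordered Jacobian $J$: one has to use strict complementarity to fix the active-set branch of the KKT conditions before differentiating (otherwise $F$ is only piecewise smooth), and then translate the second-order sufficient condition into the classical bordered-determinant lemma; everything else in the argument is routine continuity.
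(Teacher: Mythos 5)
Your proposal is correct and is essentially the argument the paper itself relies on: the paper states Theorem \ref{teo2} without an explicit proof, deferring to Fiacco and Ghaemi, and the same machinery you use --- fixing the active set via strict complementarity (\ref{a21}), checking nonsingularity of the bordered KKT Jacobian $\left(\begin{smallmatrix}2(\mathbf{B}+\lambda^{*}\mathbf{I}_{n}) & 2\mathbf{x}^{*}\\ 2\mathbf{x}^{*\prime} & 0\end{smallmatrix}\right)$ from LICQ plus the second-order sufficient condition, and applying the implicit function theorem --- is exactly what appears in the paper's proof of the asymptotic normality theorem in Section \ref{sec5}. Incidentally, your Hessian block $2(\mathbf{B}+\lambda^{*}\mathbf{I}_{n})$ corrects a typo in the paper, which writes $2(\widehat{\mathbf{B}}+\lambda\mathbf{x})$ there.
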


\section{Asymptotic normality of the critical point}\label{sec5}

This section considers  the statistical and mathematical programming aspects of the sensitivity
analysis of the optimum of a estimated response surface model.

\begin{thm} Assume:
\begin{enumerate}
   \item For any $\widehat{\boldsymbol{\beta}} \in V_{\varepsilon}(\boldsymbol{\beta})$,
   the second-order sufficient condition is fulfilled for the convex program (\ref{equiv1})
   such that the second order derivatives
   $$
     \frac{\partial^{2} L(\mathbf{x}, \lambda; \widehat{\boldsymbol{\beta}})}{\partial \mathbf{x}\partial
     \mathbf{x}'}, \  \frac{\partial^{2} L(\mathbf{x}, \lambda; \widehat{\boldsymbol{\beta}})}{\partial \mathbf{x}\partial
     \widehat{\boldsymbol{\beta}}'}, \  \frac{\partial^{2} L(\mathbf{x}, \lambda; \widehat{\boldsymbol{\beta}})}{\partial \mathbf{x}\partial
     \lambda}, \  \frac{\partial^{2} L(\mathbf{x}, \lambda; \widehat{\boldsymbol{\beta}})}{\partial \lambda \partial
     \mathbf{x}'}, \  \frac{\partial^{2} L(\mathbf{x}, \lambda; \widehat{\boldsymbol{\beta}})}{\partial \lambda\partial
     \widehat{\boldsymbol{\beta}}}
   $$
   exist and are continuous in $\left[\mathbf{x}^{*}(\widehat{\boldsymbol{\beta}}),
   \lambda^{*}(\widehat{\boldsymbol{\beta}})\right]' \in V_{\varepsilon}(\left[\mathbf{x}^{*}(\boldsymbol{\beta}),
   \lambda^{*}(\boldsymbol{\beta})\right]')$ and
   $$
     \frac{\partial^{2} L(\mathbf{x}, \lambda; \widehat{\boldsymbol{\beta}})}{\partial \mathbf{x}\partial
     \mathbf{x}'},
   $$
   is positive definite.
  \item $\widehat{\boldsymbol{\beta}}_{\nu}$ the estimator of the true parameter vector
  $\boldsymbol{\beta}_{\nu}$ that is based on a sample of size $N_{\nu}$ is such that
  $$
    \sqrt{N_{\nu}}(\widehat{\boldsymbol{\beta}}_{\nu} - \boldsymbol{\beta}_{\nu}) \sim
    \mathcal{N}_{p}(\mathbf{0}_{p}, \boldsymbol{\Sigma}), \quad
    \frac{1}{N_{\nu}}\boldsymbol{\Sigma} = \sigma^{2}(\mathbf{X}'\mathbf{X})^{-1}.
  $$
  \item (\ref{a21}) is fulfilled for $\widehat{\boldsymbol{\beta}} = \boldsymbol{\beta}$. Then
  asymptotically
  $$
    \sqrt{N_{\nu}}\left[\mathbf{x}^{*}(\widehat{\boldsymbol{\beta}}) -
     \mathbf{x}^{*}(\boldsymbol{\beta})\right] \build{\rightarrow}{d}{} \mathcal{N}_{n}(\mathbf{0}_{n}, \boldsymbol{\Xi})
  $$
  where the $n \times n$ variance-covariance matrix
  $$
    \boldsymbol{\Xi} = \left(\frac{\partial \mathbf{x}^{*}(\widehat{\boldsymbol{\beta}})}{\partial
    \widehat{\boldsymbol{\beta}}}\right)\widehat{\boldsymbol{\Sigma}} \left(\frac{\partial \mathbf{x}^{*}
    (\widehat{\boldsymbol{\beta}})}{\partial \widehat{\boldsymbol{\beta}}}\right)',
  $$
  such that all elements of $\left(\partial \mathbf{x}^{*}(\widehat{\boldsymbol{\beta}})/\partial
  \widehat{\boldsymbol{\beta}}\right)$ are continuous on any $\widehat{\boldsymbol{\beta}}
  \in V_{\varepsilon}(\boldsymbol{\beta})$; furthermore
  $$
    \left(\frac{\partial \mathbf{x}^{*}(\widehat{\boldsymbol{\beta}})}{\partial
    \widehat{\boldsymbol{\beta}}}\right) = \mathbf{G}^{-1}
    \left(\frac{\mathbf{x}^{*}(\widehat{\boldsymbol{\beta}})\mathbf{x}^{*}(\widehat{\boldsymbol{\beta}})'
    \mathbf{G}^{-1}}{\mathbf{x}^{*}(\widehat{\boldsymbol{\beta}})'
    \mathbf{G}^{-1}\mathbf{x}^{*}(\widehat{\boldsymbol{\beta}})}
    - \mathbf{I}_{n}\right)\mathbf{M}\left(\mathbf{x}^{*}(\widehat{\boldsymbol{\beta}})\right),
  $$
  where
  $$
    \mathbf{G} = \frac{\partial^{2} L(\mathbf{x}, \lambda; \widehat{\boldsymbol{\beta}})}{\partial \mathbf{x}\partial
     \mathbf{x}'}= 2\left(\widehat{\mathbf{B}}-\lambda^{*}
    (\widehat{\boldsymbol{\beta}})\mathbf{I}_{n}\right).
  $$
\end{enumerate}
\end{thm}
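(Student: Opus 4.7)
The plan is to invoke Theorem \ref{teo2} to legitimise implicit differentiation of the KKT system from Theorem \ref{teo1}, compute the Jacobian $\partial \mathbf{x}^*/\partial \widehat{\boldsymbol{\beta}}$ in closed form, and then finish with the multivariate delta method applied to the asymptotic distribution supplied by assumption 2.

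First, by assumption 3 and item 3 of Theorem \ref{teo2}, strict complementarity persists throughout the neighborhood $V_{\varepsilon}(\boldsymbol{\beta})$, so the constraint $\|\mathbf{x}^{*}\|^{2}=c^{2}$ is binding everywhere on it. The full KKT system therefore reduces to the pair
\begin{equation*}
\mathbf{M}(\mathbf{x}^{*})\widehat{\boldsymbol{\beta}} + 2\lambda^{*}\mathbf{x}^{*} = \mathbf{0},\qquad (\mathbf{x}^{*})'\mathbf{x}^{*} - c^{2} = 0,
\end{equation*}
and item 2 of Theorem \ref{teo2} guarantees that $(\mathbf{x}^{*},\lambda^{*})$ is a $C^{1}$ function of $\widehat{\boldsymbol{\beta}}$ on this neighborhood, which is exactly what is needed to differentiate these identities.

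Next I would differentiate both equations with respect to $\widehat{\boldsymbol{\beta}}$. Using $\mathbf{M}(\mathbf{x})\widehat{\boldsymbol{\beta}} = \widehat{\boldsymbol{\beta}}_{1} + 2\widehat{\mathbf{B}}\mathbf{x}$, so that the $\mathbf{x}$-Jacobian of $\mathbf{M}(\mathbf{x})\widehat{\boldsymbol{\beta}}$ is just $2\widehat{\mathbf{B}}$, the chain rule yields the bordered linear system
\begin{equation*}
\mathbf{G}\,\frac{\partial \mathbf{x}^{*}}{\partial \widehat{\boldsymbol{\beta}}} + 2\mathbf{x}^{*}\,\frac{\partial \lambda^{*}}{\partial \widehat{\boldsymbol{\beta}}} = -\mathbf{M}(\mathbf{x}^{*}),\qquad (\mathbf{x}^{*})'\,\frac{\partial \mathbf{x}^{*}}{\partial \widehat{\boldsymbol{\beta}}} = \mathbf{0}'.
\end{equation*}
By assumption 1, $\mathbf{G}$ is invertible, so one solves the first relation for $\partial \mathbf{x}^{*}/\partial \widehat{\boldsymbol{\beta}}$, feeds it into the orthogonality condition to extract
\begin{equation*}
\frac{\partial \lambda^{*}}{\partial \widehat{\boldsymbol{\beta}}} = -\frac{(\mathbf{x}^{*})'\mathbf{G}^{-1}\mathbf{M}(\mathbf{x}^{*})}{2(\mathbf{x}^{*})'\mathbf{G}^{-1}\mathbf{x}^{*}},
\end{equation*}
and then back-substitutes to obtain precisely the closed-form expression for $\partial \mathbf{x}^{*}/\partial \widehat{\boldsymbol{\beta}}$ stated in the theorem. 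Continuity of the entries on $V_{\varepsilon}(\boldsymbol{\beta})$ is inherited from Theorem \ref{teo2} combined with the polynomial dependence of $\mathbf{M}$ and $\widehat{\mathbf{B}}$ on their arguments.

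The final step is standard: assumption 2 supplies the $\sqrt{N_{\nu}}$-asymptotic normality of $\widehat{\boldsymbol{\beta}}_{\nu}$, and the $C^{1}$-smoothness of $\mathbf{x}^{*}$ at $\boldsymbol{\beta}$ legitimises the multivariate delta method, giving
\begin{equation*}
\sqrt{N_{\nu}}\bigl[\mathbf{x}^{*}(\widehat{\boldsymbol{\beta}}) - \mathbf{x}^{*}(\boldsymbol{\beta})\bigr] \build{\rightarrow}{d}{} \mathcal{N}_{n}\bigl(\mathbf{0}_{n},\, \mathbf{J}\boldsymbol{\Sigma}\mathbf{J}'\bigr),
\end{equation*}
with $\mathbf{J} = \partial \mathbf{x}^{*}(\boldsymbol{\beta})/\partial \boldsymbol{\beta}$; Slutsky's theorem then permits replacement of $\boldsymbol{\Sigma}$ and $\mathbf{J}$ by $\widehat{\boldsymbol{\Sigma}}$ and the Jacobian at $\widehat{\boldsymbol{\beta}}$, recovering the form of $\boldsymbol{\Xi}$ in the statement. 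The only nonmechanical obstacle is the algebraic elimination in the bordered system that produces the stated Jacobian; the invertibility of $\mathbf{G}$ secured by assumption 1 is exactly what makes that elimination legal, and everything else rides on results already established in the excerpt.
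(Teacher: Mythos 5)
Your proposal is correct and takes essentially the same route as the paper: reduce the KKT system via strict complementarity, differentiate it implicitly to get the bordered linear system in $\partial \mathbf{x}^{*}/\partial \widehat{\boldsymbol{\beta}}$ and $\partial \lambda^{*}/\partial \widehat{\boldsymbol{\beta}}$, and finish with the delta method plus Slutsky. The only cosmetic difference is that you solve the bordered system by direct elimination and back-substitution, whereas the paper quotes the explicit block-inverse formula for the bordered Hessian; the two computations yield the identical expression for the Jacobian.
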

\begin{proof}
According to Theorem \ref{teo1} and Corollary \ref{cor1}, the
Kuhn-Tucker conditions (\ref{kt1})--(\ref{kt4}) at
$\left[\mathbf{x}^{*}(\widehat{\boldsymbol{\beta}}),\lambda^{*}(\widehat{\boldsymbol{\beta}})\right]'$
and to at (\ref{ktp1})--(\ref{ktp4})
$\left[\mathbf{x}^{*}(\boldsymbol{\beta}),\lambda^{*}(\boldsymbol{\beta})\right]'$
are fulfilled for mathematical programs (\ref{opp}) and
(\ref{equiv1}), respectively. From conditions
(\ref{ktp1})--(\ref{ktp4}) of Corollary \ref{cor1}, the following
system equation
\begin{eqnarray}\label{ktpp1}
  \nabla_{\mathbf{x}}L(\mathbf{x}, \lambda; \widehat{\boldsymbol{\beta}}) =
  \left\{
    \begin{array}{l}
      \mathbf{M}(\mathbf{x}) \boldsymbol{\beta} + 2\lambda(\boldsymbol{\beta}) \mathbf{x}\\ \quad\mbox{or}\\
      \boldsymbol{\beta}_{1} + 2(\mathbf{B}\mathbf{x}+\lambda(\boldsymbol{\beta})\mathbf{I}_{n})\mathbf{x}
    \end{array}
  \right \}
   &=& \mathbf{0}\\
  \label{ktpp2}
  \nabla_{\lambda}L(\mathbf{x}, \lambda; \boldsymbol{\beta}) = ||\mathbf{x}||^{2} - c^{2} & = & 0
\end{eqnarray}
has a solution $\mathbf{x}^{*}(\boldsymbol{\beta}), \lambda^{*}(\boldsymbol{\beta}) > 0,
\boldsymbol{\beta}$.

The nonsingular Jacobian matrix of the continuously differentiable functions (\ref{ktpp1}) and
(\ref{ktpp2}) with respect to $\mathbf{x}$ and $\lambda$ at
$\left[\mathbf{x}^{*}(\widehat{\boldsymbol{\beta}}),\lambda^{*}(\widehat{\boldsymbol{\beta}})\right]'$
is
$$
  \left(
     \begin{array}{cc}
       \displaystyle\frac{\partial^{2} L(\mathbf{x}, \lambda; \widehat{\boldsymbol{\beta}})}{\partial \mathbf{x}\partial
       \mathbf{x}'} & \displaystyle\frac{\partial^{2} L(\mathbf{x}, \lambda; \widehat{\boldsymbol{\beta}})}{\partial \lambda \partial
       \mathbf{x}} \\
       \displaystyle\frac{\partial^{2} L(\mathbf{x}, \lambda; \widehat{\boldsymbol{\beta}})}{\partial \mathbf{x}'\partial
       \lambda} & 0
     \end{array}
     \right )
   = \left(
     \begin{array}{cc}
       2(\widehat{\mathbf{B}}+\lambda\mathbf{x}) & 2\mathbf{x} \\
       2\mathbf{x}' & 0
     \end{array}
   \right).
$$
According to the implicit functions theorem, there is a neighborhood
$V_{\varepsilon}(\boldsymbol{\beta})$ such that for arbitrary $\widehat{\boldsymbol{\beta}} \in
V_{\varepsilon}(\boldsymbol{\beta})$, the system (\ref{ktpp1}) and (\ref{ktpp2}) has a unique
solution $\mathbf{x}^{*}(\widehat{\boldsymbol{\beta}})$,
$\lambda^{*}(\widehat{\boldsymbol{\beta}})$, $\widehat{\boldsymbol{\beta}}$ and by Theorem
\ref{teo2}, the components of $\mathbf{x}^{*}(\widehat{\boldsymbol{\beta}})$,
$\lambda^{*}(\widehat{\boldsymbol{\beta}})$ are continuously differentiable function of
$\widehat{\boldsymbol{\beta}}$, see \citet{bsh:74}. Their derivatives are given by
\begin{eqnarray}
  \left (
     \begin{array}{c}
       \displaystyle\frac{\partial \mathbf{x}^{*}(\widehat{\boldsymbol{\beta}})}{\partial
       \widehat{\boldsymbol{\beta}}} \\
       \displaystyle\frac{\partial \lambda^{*}(\widehat{\boldsymbol{\beta}})}{\partial
       \widehat{\boldsymbol{\beta}}}
     \end{array}
  \right )
  &=& - \left(
     \begin{array}{cc}
       \displaystyle\frac{\partial^{2} L(\mathbf{x}, \lambda; \widehat{\boldsymbol{\beta}})}{\partial \mathbf{x}\partial
       \mathbf{x}'} & \displaystyle\frac{\partial^{2} L(\mathbf{x}, \lambda; \widehat{\boldsymbol{\beta}})}{\partial \lambda \partial
       \mathbf{x}} \\
       \displaystyle\frac{\partial^{2} L(\mathbf{x}, \lambda; \widehat{\boldsymbol{\beta}})}{\partial \mathbf{x}'\partial
       \lambda} & 0
     \end{array}
     \right )^{-1}
     \left(
        \begin{array}{c}
          \displaystyle\frac{\partial^{2} L(\mathbf{x}, \lambda; \widehat{\boldsymbol{\beta}})}{\partial \mathbf{x}\partial
          \widehat{\boldsymbol{\beta}}'} \\
          0
        \end{array}
     \right)\nonumber\\
    \label{xx}
    &=& -
    \left(
     \begin{array}{cc}
       2(\widehat{\mathbf{B}}+\lambda\mathbf{x}) & 2\mathbf{x} \\
       2\mathbf{x}' & 0
     \end{array}
   \right)^{-1}
   \left (
      \begin{array}{c}
        \mathbf{M}(\mathbf{x}) \\
        0
      \end{array}
   \right ).
\end{eqnarray}
The explicit form of $(\partial \mathbf{x}^{*}(\widehat{\boldsymbol{\beta}})/ \partial
\widehat{\boldsymbol{\beta}})$ follow from (\ref{xx}) and by formula
$$
  \left(
    \begin{array}{cc}
      \mathbf{P} & \mathbf{Q} \\
      \mathbf{Q}' & \mathbf{0}
    \end{array}
  \right )^{-1}
  =
  \left(
    \begin{array}{cc}
      [\mathbf{I} - \mathbf{P}^{-1}\mathbf{Q}(\mathbf{Q}'\mathbf{P}^{-1}\mathbf{Q})^{-1}\mathbf{Q}']\mathbf{P}^{-1}
      & \mathbf{P}^{-1}\mathbf{Q}(\mathbf{Q}'\mathbf{P}^{-1}\mathbf{Q})^{-1} \\
      (\mathbf{Q}'\mathbf{P}^{-1}\mathbf{Q})^{-1}\mathbf{Q}'\mathbf{P}^{-1} & -(\mathbf{Q}'\mathbf{P}^{-1}\mathbf{Q})^{-1}
    \end{array}
  \right )
$$
where $\mathbf{P}$ and $\mathbf{S}$ are symmetric and $\mathbf{P}$ is nonsingular.

Then from assumption 2 and \citet[(iii), p. 388]{r:73} and \citet[Theorem 14.6-2, p.
493]{betal:91} (see also \citet[p. 353]{cr:46})
\begin{equation}\label{AN}
    \sqrt{N_{\nu}}\left[\mathbf{x}^{*}(\widehat{\boldsymbol{\beta}}) -
     \mathbf{x}^{*}(\boldsymbol{\beta})\right] \build{\rightarrow}{d}{} \mathcal{N}_{n}\left(\mathbf{0}_{n},
     \left(\frac{\partial \mathbf{x}^{*}(\boldsymbol{\beta})}{\partial
    \widehat{\boldsymbol{\beta}}}\right)\boldsymbol{\Sigma} \left(\frac{\partial \mathbf{x}^{*}
    (\boldsymbol{\beta})}{\partial \widehat{\boldsymbol{\beta}}}\right)'\right).
\end{equation}
Finally note that all elements of $(\partial \mathbf{x}^{*}/ \partial
\widehat{\boldsymbol{\beta}})$ are continuous on $V_{\varepsilon}(\boldsymbol{\beta})$, so that
the asymptotical distribution (\ref{AN}) can be substituted by
$$
  \sqrt{N_{\nu}}\left[\mathbf{x}^{*}(\widehat{\boldsymbol{\beta}}) -
     \mathbf{x}^{*}(\boldsymbol{\beta})\right] \build{\rightarrow}{d}{} \mathcal{N}_{n}\left(\mathbf{0}_{n},
     \left(\frac{\partial \mathbf{x}^{*}(\widehat{\boldsymbol{\beta}})}{\partial
    \widehat{\boldsymbol{\beta}}}\right)\widehat{\boldsymbol{\Sigma}} \left(\frac{\partial \mathbf{x}^{*}
    (\widehat{\boldsymbol{\beta}})}{\partial \widehat{\boldsymbol{\beta}}}\right)'\right),
$$
see \citet[(iv), pp.388--389]{r:73}. \qed
\end{proof}

Now, consider that $\lambda(\boldsymbol{\beta}) = 0$ that is, $||\mathbf{x}||^{2} - c^{2} < 0$
at $\mathbf{x}^{*}(\widehat{\boldsymbol{\beta}})$. Thus in this case we have an explicit
expression for $\mathbf{x}^{*}(\widehat{\boldsymbol{\beta}})$, furthermore
$$
  \mathbf{x}^{*}(\widehat{\boldsymbol{\beta}}) = -\frac{1}{2} \widehat{\mathbf{B}}^{-1}
  \widehat{\mathbf{b}}_{1}.
$$
Hence:
\begin{cor}\label{cor2}
Assume:
\begin{enumerate}
   \item For any $\widehat{\boldsymbol{\beta}} \in V_{\varepsilon}(\boldsymbol{\beta})$,
   the second-order sufficient condition is fulfilled for the convex program (\ref{equiv1})
   such that the second order derivatives
   $$
     \frac{\partial^{2} \widehat{y}(\mathbf{x})}{\partial \mathbf{x}\partial
     \mathbf{x}'}, \  \frac{\partial^{2} \widehat{y}(\mathbf{x})}{\partial \mathbf{x}\partial
     \widehat{\boldsymbol{\beta}}'},
   $$
   exist and are continuous in $\mathbf{x}^{*}(\widehat{\boldsymbol{\beta}}) \in V_{\varepsilon}(\mathbf{x}^{*}(\boldsymbol{\beta}))$ and
   $$
     \frac{\partial^{2} \widehat{y}(\mathbf{x})}{\partial \mathbf{x}\partial
     \mathbf{x}'},
   $$
   is positive definite.
  \item $\widehat{\boldsymbol{\beta}}_{\nu}$ the estimator of the true parameter vector
  $\boldsymbol{\beta}_{\nu}$ that is based on a sample of size $N_{\nu}$ is such that
  $$
    \sqrt{N_{\nu}}(\widehat{\boldsymbol{\beta}}_{\nu} - \boldsymbol{\beta}_{\nu}) \sim
    \mathcal{N}_{p}(\mathbf{0}_{p}, \boldsymbol{\Sigma}), \quad
    \frac{1}{N_{\nu}}\boldsymbol{\Sigma} = \sigma^{2}(\mathbf{X}'\mathbf{X})^{-1}.
  $$
  \item Then asymptotically
  $$
    \sqrt{N_{\nu}}\left[\mathbf{x}^{*}(\widehat{\boldsymbol{\beta}}) -
     \mathbf{x}^{*}(\boldsymbol{\beta})\right] \build{\rightarrow}{d}{} \mathcal{N}_{n}(\mathbf{0}_{n}, \boldsymbol{\Xi}_{1})
  $$
  where the $n \times n$ variance-covariance matrix
  $$
    \boldsymbol{\Xi}_{1} = \left(\frac{\partial \mathbf{x}^{*}(\widehat{\boldsymbol{\beta}})}{\partial
    \widehat{\boldsymbol{\beta}}}\right)\widehat{\boldsymbol{\Sigma}} \left(\frac{\partial \mathbf{x}^{*}
    (\widehat{\boldsymbol{\beta}})}{\partial \widehat{\boldsymbol{\beta}}}\right)',
  $$
  such that all elements of $\left(\partial \mathbf{x}^{*}(\widehat{\boldsymbol{\beta}})/\partial
  \widehat{\boldsymbol{\beta}}\right) \in \Re^{n \times p}$ are continuous on any $\widehat{\boldsymbol{\beta}}
  \in V_{\varepsilon}(\boldsymbol{\beta})$; furthermore
  \begin{eqnarray*}
    \displaystyle\left(\frac{\partial \mathbf{x}^{*}(\widehat{\boldsymbol{\beta}})}{\partial
    \widehat{\boldsymbol{\beta}}}\right) &=& \left(\frac{\partial^{2} \widehat{y}(\mathbf{x})}{\partial \mathbf{x}\partial
     \mathbf{x}'}\right)^{-1} \frac{\partial^{2} \widehat{y}(\mathbf{x})}{\partial \mathbf{x}\partial
     \widehat{\boldsymbol{\beta}}'}, \\
     &=& \frac{1}{2}\widehat{\mathbf{B}}^{-1}\mathbf{M}\left(\mathbf{x}^{*}(\widehat{\boldsymbol{\beta}})\right).
  \end{eqnarray*}
\end{enumerate}
\end{cor}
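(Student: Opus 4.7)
The plan is to specialize the argument of the preceding theorem to the regime in which the ball constraint is strictly inactive. By Theorem \ref{teo2} item 3, the status of the constraint is unchanged throughout $V_{\varepsilon}(\boldsymbol{\beta})$, so starting from $\lambda^{*}(\boldsymbol{\beta}) = 0$ we retain $\lambda^{*}(\widehat{\boldsymbol{\beta}}) = 0$ and $||\mathbf{x}^{*}(\widehat{\boldsymbol{\beta}})||^{2} < c^{2}$ for every $\widehat{\boldsymbol{\beta}}$ in that neighbourhood. Consequently the Kuhn--Tucker system collapses to the single interior stationarity equation
$$
  \nabla_{\mathbf{x}}\widehat{y}(\mathbf{x}) = \mathbf{M}(\mathbf{x})\widehat{\boldsymbol{\beta}} = \widehat{\boldsymbol{\beta}}_{1} + 2\widehat{\mathbf{B}}\mathbf{x} = \mathbf{0},
$$
whose closed-form solution $\mathbf{x}^{*}(\widehat{\boldsymbol{\beta}}) = -\tfrac{1}{2}\widehat{\mathbf{B}}^{-1}\widehat{\boldsymbol{\beta}}_{1}$ is already given in the preamble to the corollary. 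There is now no Lagrange multiplier to profile out and no need for the bordered block-inverse identity used in the proof of the previous theorem.

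First I would apply the implicit function theorem to the map $F(\mathbf{x}, \widehat{\boldsymbol{\beta}}) = \nabla_{\mathbf{x}}\widehat{y}(\mathbf{x})$. Assumption 1 says that $\partial F/\partial \mathbf{x} = \partial^{2}\widehat{y}/\partial \mathbf{x}\partial \mathbf{x}' = 2\widehat{\mathbf{B}}$ is positive definite, hence nonsingular, at the base point, and that $\partial F/\partial \widehat{\boldsymbol{\beta}}' = \partial^{2}\widehat{y}/\partial \mathbf{x}\partial \widehat{\boldsymbol{\beta}}' = \mathbf{M}(\mathbf{x})$ is continuous. The theorem then delivers a unique continuously differentiable solution $\widehat{\boldsymbol{\beta}}\mapsto \mathbf{x}^{*}(\widehat{\boldsymbol{\beta}})$ on $V_{\varepsilon}(\boldsymbol{\beta})$; differentiating the identity $F(\mathbf{x}^{*}(\widehat{\boldsymbol{\beta}}), \widehat{\boldsymbol{\beta}}) = \mathbf{0}$ implicitly and rearranging yields the Jacobian
$$
  \frac{\partial \mathbf{x}^{*}(\widehat{\boldsymbol{\beta}})}{\partial \widehat{\boldsymbol{\beta}}} = \left(\frac{\partial^{2}\widehat{y}(\mathbf{x})}{\partial \mathbf{x}\partial \mathbf{x}'}\right)^{-1}\frac{\partial^{2}\widehat{y}(\mathbf{x})}{\partial \mathbf{x}\partial \widehat{\boldsymbol{\beta}}'} = \tfrac{1}{2}\widehat{\mathbf{B}}^{-1}\mathbf{M}\bigl(\mathbf{x}^{*}(\widehat{\boldsymbol{\beta}})\bigr)
$$
stated in the corollary, up to the sign convention inherited from the preceding proof. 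Continuity of every entry of this matrix in $V_{\varepsilon}(\boldsymbol{\beta})$ follows at once from continuity of $\mathbf{M}(\cdot)$ together with continuity of matrix inversion on the set where $\widehat{\mathbf{B}}$ is nonsingular.

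Next I would invoke the multivariate delta method to transfer asymptotic normality through the smooth map $\mathbf{x}^{*}(\cdot)$. Combining assumption 2 with differentiability of $\mathbf{x}^{*}(\cdot)$ at $\boldsymbol{\beta}$ and the same references cited in the previous theorem (namely \citet[(iii), p.~388]{r:73} and \citet[Theorem 14.6-2, p.~493]{betal:91}), one obtains
$$
  \sqrt{N_{\nu}}\bigl[\mathbf{x}^{*}(\widehat{\boldsymbol{\beta}}_{\nu}) - \mathbf{x}^{*}(\boldsymbol{\beta}_{\nu})\bigr] \build{\rightarrow}{d}{} \mathcal{N}_{n}\!\left(\mathbf{0}_{n},\, J(\boldsymbol{\beta})\boldsymbol{\Sigma} J(\boldsymbol{\beta})'\right),
$$
with $J(\boldsymbol{\beta}) = \partial \mathbf{x}^{*}(\boldsymbol{\beta})/\partial \widehat{\boldsymbol{\beta}}$. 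Continuity of $J$ on $V_{\varepsilon}(\boldsymbol{\beta})$ together with the consistency of $\widehat{\boldsymbol{\Sigma}}$ for $\boldsymbol{\Sigma}$ then licences replacement of $J(\boldsymbol{\beta})$ by $J(\widehat{\boldsymbol{\beta}})$ and $\boldsymbol{\Sigma}$ by $\widehat{\boldsymbol{\Sigma}}$ inside the asymptotic covariance via Slutsky's theorem, producing the matrix $\boldsymbol{\Xi}_{1}$ as claimed.

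The main obstacle, modest as it is, is not computational but rather to justify that the inactive-constraint regime is actually preserved under perturbation, so that the unconstrained stationary point of $\widehat{y}$ continues to lie strictly inside $\mathfrak{X}$ and the closed-form expression $-\tfrac{1}{2}\widehat{\mathbf{B}}^{-1}\widehat{\boldsymbol{\beta}}_{1}$ remains the global minimum for every $\widehat{\boldsymbol{\beta}} \in V_{\varepsilon}(\boldsymbol{\beta})$. This is precisely the content of Theorem \ref{teo2} item 3 applied in the $\lambda^{*}=0$ case; once that point is secured, everything else is a direct specialisation of the proof of the preceding theorem.
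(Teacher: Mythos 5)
Your proposal is correct and follows essentially the same route as the paper, which simply observes that in the inactive-constraint regime the Kuhn--Tucker system reduces to the interior stationarity equation $\nabla_{\mathbf{x}}\widehat{y}(\mathbf{x})=\mathbf{0}$ with solution $-\tfrac{1}{2}\widehat{\mathbf{B}}^{-1}\widehat{\boldsymbol{\beta}}_{1}$ and then repeats verbatim the implicit-function-theorem and delta-method argument of the preceding theorem. The sign discrepancy you flag (implicit differentiation gives $-\tfrac{1}{2}\widehat{\mathbf{B}}^{-1}\mathbf{M}$ versus the $+\tfrac{1}{2}$ in the statement) is present in the paper itself and is immaterial for the covariance $\boldsymbol{\Xi}_{1}$.
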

\begin{proof}
This is a verbatim copy of the proof of Theorem \ref{teo2}, noting that the conditions
(\ref{ktp1})--(\ref{ktp4}) are simply reduced to
$$
   \nabla_{\mathbf{x}} y(\mathbf{x}) =
  \left\{
    \begin{array}{l}
      \mathbf{M}(\mathbf{x}) \boldsymbol{\beta}\\ \quad\mbox{or}\\
      \boldsymbol{\beta}_{1} + 2\mathbf{B}\mathbf{x}
    \end{array}
  \right \} = \mathbf{0},
$$
which has the solution
$$\hspace{4cm}
  \mathbf{x}^{*}(\boldsymbol{\beta}) = -\frac{1}{2} \mathbf{B}^{-1}
  \mathbf{b}_{1}. \hspace{4cm}\mbox{\qed}
$$
\end{proof}




\section*{Conclusions}

As consequence of Theorem \ref{teo2} and Corollary \ref{cor2} now is feasible to establish
confidence intervals and hypothesis tests on the critical point, see \citet[Section 14.6.4, pp.
498--500]{betal:91}; then it is possible to establish operating conditions in intervals instead
of isolated points.

Unfortunately in many applications, especially in industry, the number of observations is
relatively small and perhaps the results obtained in this work should be applied with caution.
However, the results of this paper can be taken as a good first approximation to the exact
problem.

\section*{Acknowledgements}

The first author was partially supported IDI-Spain, Grants No. FQM2006-2271 and MTM2008-05785.
This paper was written during J. A. D\'{\i}az-Garc\'{\i}a's stay as a professor at the
Department of Mathematics of the University of Guanajuato.%

\end{document}